\newcommand{\imp}{\!\rightarrow\!}
\newcommand{\imps}{\!\rightarrow\!}
\newcommand{\pa}{{\sf PA}}
\newcommand{\gl}{{\sf GL}}
\newcommand{\gla}{{\sf GLA}}
\newcommand{\glacs}{{\sf GLA}_{\mbox{\it\tiny CS}}}
\newcommand{\glae}{{\sf GLA}_{\tiny\emptyset}}
\newcommand{\lp}{{\sf LP}}
\newcommand{\ai}[1]{ {#1}^{\ast} }
\newcommand{\Ai}[1]{ ({#1})^{\ast} }
\newcommand{\proves}{\vdash}
\newcommand{\lc}[1]{#1\!\!:\!\!}
\newcommand{\co}[1]{! {#1} }
\newcommand{\Provable}[1]{ \hbox{\it Provable\/}{#1} }
\newcommand{\Proof}[1]{ \hbox{\it Proof\/}({#1}) }
\newcommand{\PA}{{\sf PA}}
\newtheorem{Prop}{\bf Proposition}
\newtheorem{Theor}{\bf Theorem}
\newenvironment{theorem}{\begin{Theor}\em }{\end{Theor}}
\newtheorem{Lemma}{\bf Lemma}
\newenvironment{lemma}{\begin{Lemma}\em }{\end{Lemma}}
\newtheorem{Coro}{\bf Corollary}
\newenvironment{corollary}{\begin{Coro}\em }{\end{Coro}}
\newtheorem{Fact}{\bf Fact.}
\newtheorem{Remark}{\bf Remark}
\newtheorem{Claim}[enumi]{Claim}
\newtheorem{defin}{\bf Definition}
\newenvironment{definition}{\begin{defin} \em}{\end{defin}}
\newtheorem{exam}{\bf Example}
\newtheorem{notat}{\bf Notation.}
\newenvironment{proof}{{\bf Proof.}}{\hfill $\slot$}
\newcommand{\slot}{\hfill \mbox{$\Box$}\vspace{\parskip}\\}
\newtheorem{Comment}{\bf Comment}
\begin{document}

\title{On a  Hierarchy of Reflection Principles in Peano Arithmetic}

\author{Elena Nogina\thanks{Supported by PSC CUNY Research Awards program.} \\ \\
 {\small BMCC CUNY, Department of Mathematics}\\
{\small 199 Chambers Street, New York, NY 10007}\\
{\small {\tt E.Nogina@gmail.com}} }

\date{\empty}
\maketitle
\begin{abstract}  
We study reflection principles of Peano Arithmetic {\sf PA} which are based on both proof and provability. Any such reflection principle in {\sf PA} is equivalent to either $\Box P\!\rightarrow\! P$ ($\Box P$ stands for {\it $P$ is provable}) or $\Box^k u\!\!:\!\!P\!\rightarrow\! P$ for some $k\geq 0$ ($\lc{t}P$ states {\it $t$ is a proof of $P$}). Reflection principles constitute a non-collapsing hierarchy with respect to their deductive strength 
$$u\!\!:\!\!P\!\rightarrow\! P\ \ \prec\  \ \Box u\!\!:\!\!P\!\rightarrow\! P\ \ \prec\  \ \Box^2 u\!\!:\!\!P\!\rightarrow\! P \ \ \prec\ \ldots\ \prec\  \ \Box P\!\rightarrow\! P.$$
 
\end{abstract}

\medskip\par
\section{Introduction}

Reflection Principles are classical objects in Proof Theory. They were introduced by Rosser \cite{Ros36} and Turing \cite{Tur39} in the 1930s, and later studied by Feferman \cite{Fef60,Fef62}, Kreisel and L\'{e}vi \cite{KL68}, Schmerl \cite{Sch79}, Artemov \cite{Art87}, Beklemishev \cite{Bek97,Bek03}, and many others (cf. survey \cite{AB05}).
 
A  \emph{proof predicate} is a provably decidable formula
\emph{Proof} that enumerates all theorems of {\sf PA},
$${\sf PA}\proves\varphi \ \ \ \ \mbox{ iff }\ \ \ \ \Proof{k,\varphi}  \ \mbox{for some $k$.}$$ 
In this paper all proof predicates are assumed \emph{normal} (\cite{Art01a}), t.e.  \\ 
1. for every $k$ set $$T(k)=\{\varphi\mid\Proof{k, \varphi}\}$$ is finite, the function from
$k$ to $T(k)$ is computable; \\ \\
2. for any $k$ and $l$ there is $n$ such that
\[ T(k)\cup T(l)\subseteq T(n) .\]
Prime example: G\"odel's proof predicate. 

A natural example of a Reflection Principle is given by so-called local (or implicit) reflection. Let $\Provable{\ F}$ be $\exists x\Proof{x,F}$.
In the formal provability setting, the local reflection principle is the set of all arithmetical formulas 
$$
\mbox{\it Provable $F$ $\imps\ F$,}
$$
where $F$ is an arithmetical formula.
Though all the instances of this reflection principle are true in the standard model of Peano Arithmetic \pa, some of them are not provable. For example, if $F$ is falsum $\bot$, the local reflection principle becomes G\"odel's consistency formula
$$
\mbox{\it $\neg$Provable $\bot$.}
$$

Another example is given by the explicit reflection principle, i.e., the set of formulas 
\[
\Proof{t,F}\imps F
\]
where $t$ is an arbitrary proof term, and $F$ an arithmetical formula. Here the situation is quite different; all instances of explicit reflection are provable.

Indeed, if $\Proof{t,F}$ holds, then $F$ is obviously provable in ${\sf PA}$, and
so is formula $\Proof{t,F}\imp F$. If $\neg\Proof{t,F}$ holds, then it is provable in ${\sf PA}$ (since $\neg\Proof{x,y}$ is decidable) and
$\Proof{t,F}\imps F$ is again provable.

We study (cf. \cite{Nog14b}) reflection principles of Peano Arithmetic {\sf PA} which are based on both proof and provability predicates. (cf. \cite{Art01a,Boo93}). 

Let $P$ be a propositional letter and each of $Q_1,Q_2,\ldots ,Q_m$ is either `$\Box$' standing for provability in {\sf PA}, or `$u\!:$' standing for 
\[\mbox{`{\it $u$ is a proof of $\ldots$ in {\sf PA}}'},\] $u$ is a fresh proof variable. Then the formula
$$Q_1Q_2 \ldots Q_m P\!\rightarrow\! P$$ 
is called {\it generator}, and the set of all its arithmetical instances is the {\it reflection principle} corresponding to this generator. We will refer to reflection principles using their generators. 

It is immediate that all reflection principles without explicit proofs ($Q_i=\Box$ for all $i$) are equivalent to the local reflection principle $\Box P\!\rightarrow\! P$. All $\Box$-free reflection principles are provable in {\sf PA} and hence equivalent to $u\!\!:\!\!P\!\rightarrow\! P$. Mixing explicit proofs and provability yields infinitely many new reflection principles: 

1.  {\it Any reflection principle in {\sf PA} is equivalent to either $\Box P\!\rightarrow\! P$ or
 $\Box^k u\!\!:\!\!P\!\rightarrow\! P$ for some $k\geq 0$.}

2.  {\it Reflection principles constitute a non-collapsing hierarchy with respect to their deductive strength
$$u\!\!:\!\!P\!\rightarrow\! P\ \prec\ \Box u\!\!:\!\!P\!\rightarrow\! P\ \prec\ \Box^2 u\!\!:\!\!P\!\rightarrow\! P\ \prec\ \ldots\ \prec\ \Box P\!\rightarrow\! P.$$}
The proofs essentially rely on introduced by the author G\"odel-L\"ob-Art\"emov logic \gla\ of formal provability and explicit proofs. 

\section{Description and basic properties of \gla}

We describe the logic \gla\ introduced in \cite{Nog06} (see also \cite{Nog14a}) in the union of the original languages of 
G\"odel-L\"ob Logic \gl (cf. \cite{Boo93, Sol76}) and Artemov's Logic of Proofs \lp (\cite{Art01a}).  

The following two systems were predecessors of \gla: 
\begin{itemize}
\item
system ${\sf B}$ from \cite{Art94}, which did not have operations on proofs; 
\item
system ${\sf LPP}$  from \cite{Sid97a,Y_S01} in an extension of languages of the logic of formal provability \gl\  and the Logic of Proofs \lp.
\end{itemize}
The immediate successors of \gla\ are the logic {\sf GrzA} (\cite{Nog09}) of strong provability and explicit proofs  and symmetric logic of proofs and provability (\cite{Nog10}).
\medskip\par\noindent
{\bf Language of \gla.}
\medskip\par\noindent
\emph{Proof terms} are built from \emph{proof variables} $x,y,z,\dots$ and \emph{proof constants}
$a,b,c,\dots$ by means of two binary operations: \emph{application}~`$\cdot$' and \emph{union}~`$+$', and one
unary \emph{proof checker}~`$!$'.

Formulas of {\sf GLA} are defined by the grammar
\[ A=S\mid A\imp A \mid A\wedge A \mid A\vee A \mid \neg A \mid \Box A \mid \lc{t}A\ ,\]
where $t$ stands for any proof term and $S$ for any sentence letter.

Axioms and rules of both G\"odel-L\"ob logic {\sf GL} and {\sf LP}, together with three specific principles
connecting explicit proofs with formal provability, constitute $ \sf \glae$.

\medskip\par\noindent
I. {\bf Axioms of classical propositional logic} \medskip\par
Standard axioms of the classical logic (e.g., {A1-A10} from \cite{Kle52})
\medskip\par\noindent
II. {\bf Axioms of Provability Logic {\sf GL}}\medskip\par
{\bf GL1} $\Box(F\imp G)\imp(\Box F\imp \Box G)$ \hfill {\em Deductive Closure/Normality}\par
{\bf GL2} $\Box F \imp \Box\Box F$ \hfill {\em Positive Introspection/Transitivity}\par
{\bf GL3} $\Box (\Box F \imp F)\imp \Box F$ \hfill {\em L\"ob Principle}\par
\medskip\par\noindent
III. {\bf Axioms of the Logic of Proofs $\lp$}
\medskip\par {\bf LP1} $\lc{s}(F\imp G)\ \imp\ (\lc{t}F\imp \lc{[s\!\cdot\!t]}G)$ \hfill
\emph{Application}
\par {\bf LP2} $\lc{t}F\ \imp\ \lc{\co{t}}(\lc{t}F)$ \hfill \emph{Proof Checker}
\par {\bf LP3} $\lc{s}F\imp\lc{[s\!+\!t]}F$, $\ \ \lc{t}F\imp\lc{[s\!+\!t]}F$ \hfill \emph{Sum}
\par {\bf LP4} $\lc{t}F\imp F$ \hfill \emph{Explicit Reflection}
\medskip\par\noindent
IV. {\bf Axioms connecting explicit and formal provability}
\medskip\par {\bf C1} $\lc{t}F\imp \Box F$ \hfill \emph{Explicit-Implicit connection} \par
{\bf C2} $\neg\lc{t}F\imp \Box\neg\lc{t}F$ \hfill \emph{Explicit-Implicit Negative Introspection} \par
{\bf C3} $\lc{t}\Box F\imp F$ \hfill \emph{Explicit-Implicit Reflection} \par
\medskip\par\noindent
V.  {\bf Rules of inference}
\par {\bf R1} $F\imp G,\ F\proves G$ \hfill \emph{Modus Ponens} \par
{\bf R2} $\vdash F\ \Rightarrow\ \vdash \Box F$ \hfill \emph{Necessitation}\par
{\bf R3} $\vdash \Box F \Rightarrow\ \vdash F$ \hfill \emph{Reflection Rule}\par
 \medskip\par\noindent
A {\bf Constant Specification} ${C\!S}$ for $\gla$ is the set of formulas 
\[ \{\lc{c_1}A_1,\lc{c_2}A_2,\lc{c_3}A_3,\ldots\}, \]
where each $A_i$ is an axiom of $\glae$ and each $c_i$ is a proof constant.
$$\glacs\ = \glae\ + C\!S,$$
$$\gla\ = \glacs\ \mbox{with the ``total" \it$C\!S$}. $$

One of the principal properties of \gla\ is its ability to internalize its own proofs \cite{Nog14a}: 
\emph{If $\gla\proves F$, then for some proof term $p$, $\gla\proves\lc{p}F$}. 

An arithmetical interpretation $\ast$ of a \gla-formula is the direct sum of corresponding arithmetical interpretations for $\gl$ and $\lp$; in particular, 
$$\Ai{\Box G}  =  \Provable\ {\ai{G}};$$
$$\Ai{\lc{p}F}\ \ =\ \ \Proof{\ai{p},\ai{F}}.$$

$\gla$ is sound with respect to the arithmetical provability interpretation (\cite{Nog06,Nog14a}): 
\par
{\it For any Constant Specification $CS$ and any arithmetical interpretation $\ast$ respecting $CS$, if $\glacs\proves F$ then $\pa\proves F^\ast$.}

The following arithmetical completeness theorem holds (\cite{Nog06,Nog14a}): 
\par
{\it For any finite constant specification $C\!S$, if 
$\glacs\not\proves F$, then for some interpretation $\ast$ respecting $C\!S$, $\pa\not\proves\ai{F}$.}

In \cite{Nog07,Nog14a}, \gla\ was supplied with Kripke-style semantics and found to be complete with respect to it. 

\section{Reflection principles in Peano Arithmetic}

Fix a normal proof predicate {\it Proof} and, therefore, the corresponding provability predicate {\it Provable}. If $F$ is a \gla-formula, then $\{\ai{F}\}$ denotes the set of all arithmetical interpretations of $F$ based on {\it Proof} and {\it Provable}. 

\begin{definition} Let $P$ be a propositional letter and each of $Q_1,Q_2,\ldots ,Q_m$ be either  $\Box$ or `$\lc{u}\ $' for some fresh proof variable $u$. Then a formula
\[
Q_1Q_2\ldots Q_m P \imps P
\]
is called a {\it generator} and the set $\{[Q_1Q_2\ldots Q_mP \imps P]^\ast\}$ is a {\it reflection principle} corresponding to this generator.

\end{definition}
For example, the implicit reflection principle is generated by \gla-formula $\Box P \imps P$,  the explicit reflection is generated by $\lc{u}P\imps P$. 

\begin{definition} Let $G$ and $H$  be {\sf GLA}-formulas. We say that $\{\ai{H}\}\preceq  \{\ai{G}\}$, or $H\preceq  G,$ for short, if $\PA+\{\ai{G}\}$ proves all formulas from $\{\ai{H}\}$. $H \simeq G$ (is read as ``\emph{$H$ is equivalent to $G$}'') means that both $H\preceq  G$ and $G\preceq  H$ hold; $H\prec G$ stands for ($H\preceq  G$ and $H\not\simeq G$).
\end{definition}

Example: $$\lc{u}P\imps P\ \ \ \prec \ \  \Box P \imps P. $$

We study the structure of reflection principles in the explicit-implicit language. In particular, we establish 
classification of reflection principles (Theorem \ref{tfour}):
%that
%\begin{enumerate}
%\item 
\par
\emph{Any reflection principle is equivalent to either $\Box P\imps P$ or, for some $k\geq  0$, to $\Box^k\lc{u}P\imps P$}.
\medskip\par\noindent
We also discover that
%\item
reflection principles constitute a hierarchy (Theorem \ref{nine-four}):
\par
%\[ [
$$\lc{u}P\imps P\ \prec\ \Box\lc{u}P\imps P\ \prec\ \Box^2\lc{u}P\imps P\ \prec\ \ldots\ \prec\ \Box P\imps P. $$
%\]
%\end{enumerate}
These two results could be immediately concluded from the well-known fact  (Lemma \ref{nine-five}):
$$ \neg\bot\ \prec\ \neg\Box\bot\ \prec\ \neg\Box^2\bot\ \prec\ \ldots\ \prec \Box P\imps P $$
together with the following assertions we will establish in this section: 
\begin{enumerate}
\item \emph{For each $n\geq 1$,} $\Box^n P\imps P \ \ \simeq\ \  \Box P\imps P$ ({Uniqueness of Provability Reflection}, Theorem \ref{nine-one});
\item \emph{For $k\geq 0$,}  $\Box^k\lc{u}Q_1Q_2\ldots Q_n P \imps P \ \ \simeq\ \ \Box^k\lc{u}P\imps P$ (Theorem \ref{generalk});
\item  \emph{For $k\geq 0$,} $\ \ \Box^k\lc{u}P\imps P\ \ \simeq \ \ \neg\Box^k\bot$, (Theorem \ref{six}).
\end{enumerate}

\subsection{Uniqueness of Provability Reflection}

Let $Q_1Q_2\ldots Q_m P \imps P$ be a generator, and $Q_1Q_2\ldots Q_m$ consists only of implicit provability operators $\Box$. It is obvious that the corresponding principle is equivalent to $\Box P \imps P$.

\begin{theorem}\label{nine-one}{\bf (Uniqueness of Provability Reflection)}
\end{theorem}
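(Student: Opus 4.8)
The assertion is that $\Box^nP\imps P\simeq\Box P\imps P$ for every $n\geq 1$; it splits into the two relations $\Box^nP\imps P\preceq\Box P\imps P$ and $\Box P\imps P\preceq\Box^nP\imps P$, and for $n=1$ it is trivial, so I focus on $n\geq 2$. The plan rests on one observation about the reflection principles involved: as $\ast$ ranges over the arithmetical interpretations based on the fixed normal proof predicate, $[P]^\ast$ ranges over \emph{all} arithmetical sentences, while $[\Box^kP]^\ast$ is the $k$-fold iteration of the provability predicate applied to $[P]^\ast$. Writing $\Box^k\psi$ for this iteration when $\psi$ is an arithmetical sentence (so $\Box^0\psi=\psi$ and $\Box^{j+1}\psi$ is the sentence asserting that $\Box^j\psi$ is provable --- a harmless reuse of the modal symbol), the reflection principle of $\Box^kP\imps P$ is exactly $\{\,\Box^k\psi\imps\psi\mid\psi\ \text{an arithmetical sentence}\,\}$. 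In particular each $\Box^j\psi$ is itself an arithmetical sentence and hence an admissible value of $[P]^\ast$, so each implication $\Box^{j+1}\psi\imps\Box^j\psi$ (that is, $\Box\chi\imps\chi$ for $\chi=\Box^j\psi$) is an instance of the local reflection principle.

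For $\Box^nP\imps P\preceq\Box P\imps P$ I would show that $\PA$ together with the local reflection principle proves every $\Box^n\theta\imps\theta$. Take, for $j=0,1,\dots,n-1$, the $n$ instances $\Box^{j+1}\theta\imps\Box^j\theta$ of local reflection just noted, and chain them propositionally inside $\PA$: this gives $\Box^n\theta\imps\Box^{n-1}\theta\imps\cdots\imps\Box\theta\imps\theta$, hence $\Box^n\theta\imps\theta$. That is the whole argument for this direction.

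For $\Box P\imps P\preceq\Box^nP\imps P$ I would show that $\PA$ together with the $n$-fold reflection principle proves every $\Box\theta\imps\theta$. The key point is that $\PA$ \emph{alone} proves $\Box\theta\imps\Box^n\theta$ --- this is $n-1$ successive applications of the arithmetized axiom {\bf GL2}, $\Box F\imps\Box\Box F$ (equivalently, the third Hilbert--Bernays--L\"ob derivability condition), each instance of which is a theorem of $\PA$ by the arithmetical soundness of \gla\ recalled in Section~2. Combining $\Box\theta\imps\Box^n\theta$ with the instance $\Box^n\theta\imps\theta$ of the $n$-fold reflection principle yields $\Box\theta\imps\theta$, and the two directions together give $\Box^nP\imps P\simeq\Box P\imps P$. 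I do not expect a genuine obstacle here: both directions are one-line chaining arguments, and the only points needing a moment's care are the bookkeeping that $[\Box^kP]^\ast$ really is iterated provability (so that each $\Box^j\psi$ is an available reflection instance) and the availability of $\Box F\imps\Box\Box F$ arithmetically --- both immediate from the setup recalled in Section~2.
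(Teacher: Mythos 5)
Your proposal is correct and follows essentially the same route as the paper: one direction uses that $\PA$ proves $\Box\theta\imps\Box^n\theta$ (iterated {\bf GL2}, i.e.\ arithmetical soundness of $\glae\proves\Box P\imps\Box^n P$), and the other chains the local-reflection instances $\Box^{j+1}\theta\imps\Box^j\theta$, exactly the conjunction-of-syllogisms argument the paper formalizes inside $\glae$. The bookkeeping point you flag --- that $[\Box^jP]^\ast$ is an arithmetical sentence and hence itself an admissible value of $P$ under some interpretation --- is the same (implicit) observation the paper relies on.
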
 
\begin{proof} In light of the arithmetic soundness of $\glae$, $$\Box P\imps P\  \preceq  \ \Box^n P\imps P$$ follows from the fact that $\glae\proves \Box P\imps\Box^n P$. The converse inequality
$\Box^n P\imps P\  \preceq  \ \Box P\imps P$ is implied by the fact that
\[ [\Box^n P\imps \Box^{n-1}P]\wedge[\Box^{n-1} P\imps \Box^{n-2}P]\wedge\ldots \wedge[\Box P\imps P] \imps [\Box^n P\imps P] \]
is derivable in $\glae$.
\end{proof}

\subsection{Leading-Explicit Reflection Principles are provable}

\begin{theorem}\label{e-ref} \emph{For any $n\geq 0$, $\glae\proves\lc{u}Q_1Q_2\ldots Q_n P \imps P$. }
\end{theorem}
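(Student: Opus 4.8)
The plan is to prove this by induction on the structure of the leading-explicit generator, peeling off operators from the innermost position outward — i.e., showing $\glae\proves\lc{u}Q_1\ldots Q_nP\imps P$ by reducing it to the already-established explicit reflection axiom {\bf LP4}, $\lc{u}P\imps P$. The key observation is that the axioms {\bf C1} ($\lc{t}F\imps\Box F$), {\bf C3} ($\lc{t}\Box F\imps F$), and the L\"ob/{\sf GL} machinery allow us to ``strip" a $\Box$ or a $\lc{\cdot}$ occurring just under the outermost proof term. First I would treat the base case $n=0$, which is exactly {\bf LP4}. Then for the inductive step I would split on what $Q_n$ (the operator applied directly to $P$, i.e.\ the innermost one) is.

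If the innermost relevant operator chain ends in $\Box$, the governing fact is {\bf C3}: $\lc{u}\Box F\imps F$. More precisely, I want to show $\lc{u}Q_1\ldots Q_{n-1}\Box P\imps P$, and {\bf C3} instantiated with $F = Q_1\ldots Q_{n-1}\Box P$ only gives $\lc{u}Q_1\ldots Q_{n-1}\Box P \imps Q_1\ldots Q_{n-1}\Box P$, which is not yet $P$. So the cleaner route is induction from the outside: given $\lc{u}Q_1\ldots Q_nP$, I want to reach $Q_2\ldots Q_nP$ and then apply the induction hypothesis (on a shorter generator whose leading operator is $Q_1$ — but here the leading operator must stay $\lc{u}$, so this needs care). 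Let me instead structure it as: from $\lc{u}Q_1\ldots Q_nP$, if $Q_1=\Box$, use {\bf C3} directly to get $Q_2\ldots Q_nP$; but that drops the proof term entirely. That suggests the right induction is on $n$ with the statement ``$\glae\proves \lc{u}Q_1\ldots Q_nP\imps P$" and in the step we use {\bf C1} to convert $\lc{u}(\cdots)$ into $\Box(\cdots)$ when helpful, or {\bf C3}/{\bf LP4} to strip from the front.

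Concretely, the cleanest argument: By {\bf C1}, $\lc{u}Q_1\ldots Q_nP\imps\Box Q_1\ldots Q_nP$. Now I claim $\glae\proves \Box Q_1\ldots Q_nP\imps P$ would suffice — but that is false in general (it is essentially $\Box^{n+1}P\imps P$ when all $Q_i=\Box$, which is not provable). So {\bf C1} alone is too lossy; the proof term must be retained. The correct move when $Q_1=\Box$: note $\lc{u}\Box(Q_2\ldots Q_nP)\imps Q_2\ldots Q_nP$ by {\bf C3}, and then apply the induction hypothesis to the generator $\lc{u'}Q_2\ldots Q_nP\imps P$ — except the hypothesis gives provability of $\lc{u'}Q_2\ldots Q_nP\imps P$, not of $Q_2\ldots Q_nP\imps P$. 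The resolution is that we do \emph{not} need a proof term on $Q_2\ldots Q_nP$ at all once we are below the single outermost $\lc{u}$: we prove by a separate induction that $\glae\proves Q_2\ldots Q_nP\imps P$ is \emph{not} what we want either. Rather, the honest structure is a single induction on $n$ establishing directly $\lc{u}Q_1\ldots Q_nP\imps P$: if $Q_1 = \lc{v}$ for a variable, use {\bf LP4} ($\lc{v}(Q_2\ldots Q_nP)\imps Q_2\ldots Q_nP$) together with {\bf C2}/normality to commute $\lc{u}$ past it; if $Q_1=\Box$, use {\bf C3}. In both cases one reduces $\lc{u}Q_1\ldots Q_nP$ to something of the form $\lc{u}Q_2\ldots Q_nP$ (absorbing the application via {\bf LP1} and a suitable proof term built from $u$), then invoke the induction hypothesis.

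The main obstacle I anticipate is precisely this bookkeeping of proof terms: each stripping step must produce a term — e.g.\ via {\bf LP1} applied to an internalized proof of the stripping axiom {\bf C3} or {\bf LP4}, of the form $\lc{c}(\lc{u}Q_1\ldots Q_nP\imps\lc{u}Q_2\ldots Q_nP)$ from the constant specification — so that the outermost proof annotation is preserved through the induction. Once the term-management is set up (using {\bf LP1}, {\bf LP3} for combining, and the constant specification to internalize the {\sf GL}/{\sf LP} theorems used), the logical content is routine: {\bf C3} handles a leading $\Box$, {\bf LP4} handles a leading explicit proof, and classical propositional logic chains the implications. I would present it as: induction on $n$; base case {\bf LP4}; inductive step by cases on $Q_1$, each case a two-line derivation citing {\bf C3} or {\bf LP4}, {\bf C1}, {\bf LP1}, and the induction hypothesis.
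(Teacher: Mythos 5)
There is a genuine gap: your argument leans on the constant specification, but the theorem is stated for $\glae$, the system with the \emph{empty} constant specification. Every stripping step in your final plan requires an internalized proof of the stripping axiom --- a constant $c$ with $\lc{c}(\lc{v}X\imps X)$ or $\lc{c}(\lc{u}\Box X\imps X)$ --- fed into {\bf LP1} (plus {\bf LP2} in the $\Box$ case) to produce $\lc{u}\lc{v}X\imps\lc{[c\cdot u]}X$ or $\lc{u}\Box X\imps\lc{[c\cdot\,!u]}X$. None of these constant axioms exist in $\glae$, so at best your induction proves the statement for $\glacs$ with a suitably chosen nonempty $C\!S$, not the theorem as stated. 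Even granting constants, the induction hypothesis would have to be strengthened from a fresh proof \emph{variable} $u$ to an arbitrary leading proof term, since after one step the outer annotation is a compound term such as $[c\cdot\,!u]$; you gesture at ``a suitable proof term built from $u$'' but never adjust the statement being proved. The mismatch also propagates: the corollary drawn from this theorem is used via arithmetical soundness of $\glae$ for \emph{every} interpretation of the generator, and with a nonempty $C\!S$ one would additionally need to argue that each such interpretation extends to one respecting $C\!S$.

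The missing idea is how the paper handles a leading $\Box$ without any internalization. In the case $Q_1=\lc{v}$ no term bookkeeping is needed at all: applying {\bf LP4} to the outermost $\lc{u}$ yields $\lc{v}Q_2\ldots Q_nP$, which again begins with an explicit proof variable, so the induction hypothesis applies directly to it. The real content is the $\Box$ case, and there the paper does not convert $\lc{u}\Box X$ into some $\lc{t}X$ (that is precisely the move that needs constants); instead it proves $\lc{u}\Box^m F\imps F$ outright in $\glae$ for a whole block of boxes: from $\neg\Box^m F$ one gets $\neg\lc{u}\Box^m F$ (contrapositive of {\bf LP4}), then $\Box\neg\lc{u}\Box^m F$ by {\bf C2}, hence $\Box^m(\lc{u}\Box^m F\imps F)$ by {\sf GL} reasoning and transitivity; the same conclusion follows trivially from $\Box^m F$, so $\Box^m(\lc{u}\Box^m F\imps F)$ is derivable, and $m$ applications of the Reflection Rule {\bf R3} peel off the boxes. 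Your instinct that {\bf C3} alone is too lossy (once the term is dropped, $\Box X\imps X$ is unprovable) was right, but the fix is not preserving the term via {\bf LP1}; it is the {\bf C2}/{\bf R3} derivation, which is exactly what makes the theorem go through in the constant-free system.
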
 
\begin{proof} Induction on $n$. The base case $n=0$ is trivial. For the induction step consider two cases. 

\emph{Case 1}: $Q_1$ is ``$\lc{v}\ $" for some proof variable $v$. Then, by explicit reflection,
\[ \glae\proves\lc{u}Q_1Q_2\ldots Q_n P\imps \lc{v}Q_2\ldots Q_n P. \]
By the Induction Hypothesis, $$\glae\proves\lc{v}Q_2\ldots Q_n P \imps P.$$ Hence
\[ \glae\proves\lc{u}Q_1Q_2\ldots Q_n P \imps P .\]

\emph{Case 2}: $Q_1$ is $\Box$. Then $\lc{u}Q_1Q_2\ldots Q_n P$ has type $\lc{u}\Box^m F$ for some $m\geq 1$, where $F$ is either $P$ or $\lc{w}Q_{n-m-1}\ldots Q_n P$. Now we show that
$\glae\proves \lc{u}\Box^m F\imps F$.
Indeed, \\
\begin{tabular}{lll}
&&\\
1. & $\neg\Box^m F\imp\neg\lc{u}\Box^m F$, & by E-reflection; \\
2. & $\neg\lc{u}\Box^m F\imp\Box(\neg\lc{u}\Box^m F)$, & axiom {\bf C2};\\
3. & $\Box(\neg\lc{u}\Box^m F)\imp\Box(\lc{u}\Box^m F\imp F)$, & by reasoning in {\sf GL};\\
4. & $\neg\Box^m F\imp\Box(\lc{u}\Box^m F\imp F)$, & from 1,2, and 3;\\
5. & $\Box(\lc{u}\Box^m F\imp F)\imp \Box^m(\lc{u}\Box^m F\imp F)$, & from transitivity;\\
6. & $\neg\Box^m F\imp\Box^m(\lc{u}\Box^m F\imp F)$, & from 4 and 5;\\
7. & $\Box^m F\imp\Box^m(\lc{u}\Box^m F\imp F)$, & by reasoning in {\sf GL};\\
8. & $\Box^m(\lc{u}\Box^m F\imp F)$, & from 6 and 7;\\
9. & $\lc{u}\Box^m F\imp F$, & by Reflection Rule.\\
\end{tabular}
\medskip\par\noindent
If $F$ is $P$ we are done; if $F$ is $\lc{w}Q_{n-m-1}\ldots Q_n P$, then, by the Induction Hypothesis, $\glae\proves F\imps P$ which yields the theorem claim as well.
\end{proof}

\begin{corollary}{\bf (Uniqueness of Leading-Explicit Reflection)}\label{k=0} \emph{ Let $\lc{u}Q_1Q_2\ldots Q_n P \imps P$ be a reflection principle generator. Then}
\[ \lc{u}Q_1Q_2\ldots Q_n P \imps P\ \ \simeq\ \ \lc{u}P\imps P.\]

\end{corollary}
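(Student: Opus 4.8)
The plan is to derive the corollary directly from Theorem~\ref{e-ref} together with the arithmetical soundness of $\glae$, with essentially no extra work. The key observation is that both generators in question are theorems of $\glae$: the generator $\lc{u}Q_1Q_2\ldots Q_n P \imps P$ by Theorem~\ref{e-ref}, and the generator $\lc{u}P\imps P$ trivially (it is the $n=0$ instance of that theorem, namely axiom {\bf LP4}). By arithmetical soundness of $\glae$ with respect to the provability interpretation, this means that for every arithmetical interpretation $\ast$ the sentences $[\lc{u}Q_1Q_2\ldots Q_n P \imps P]^\ast$ and $[\lc{u}P\imps P]^\ast$ are already provable in $\PA$ itself.

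From here I would check both halves of the claimed equivalence against the definition of $\preceq$. For $\lc{u}P\imps P\ \preceq\ \lc{u}Q_1Q_2\ldots Q_n P\imps P$ I must show that $\PA$ together with all arithmetical instances of $\lc{u}Q_1Q_2\ldots Q_n P\imps P$ proves every instance of $\lc{u}P\imps P$; but $\PA$ alone already does so, so none of the instances of the longer generator are needed. The converse, $\lc{u}Q_1Q_2\ldots Q_n P\imps P\ \preceq\ \lc{u}P\imps P$, is symmetric: by Theorem~\ref{e-ref}, $\glae\proves\lc{u}Q_1Q_2\ldots Q_n P\imps P$, so each $[\lc{u}Q_1Q_2\ldots Q_n P\imps P]^\ast$ is a $\PA$-theorem and is therefore provable in $\PA$ extended by the instances of $\lc{u}P\imps P$. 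Combining the two inequalities yields $\lc{u}Q_1Q_2\ldots Q_n P\imps P\ \simeq\ \lc{u}P\imps P$, which is the assertion.

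I do not anticipate any genuine obstacle here: the entire substantive content has been absorbed into Theorem~\ref{e-ref}. The only points requiring care are (i) reading the definition of $\preceq$ correctly, so that it is the full sets $\{\ai{\cdot}\}$ of arithmetical interpretations that are being compared, and (ii) recording the trivial but essential fact that a generator all of whose instances are $\PA$-provable sits at the bottom of the $\preceq$-ordering, and is hence $\simeq$-equivalent to any other such generator, in particular to $\lc{u}P\imps P$.
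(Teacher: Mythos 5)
Your proposal is correct and follows exactly the paper's route: the paper likewise derives the corollary in one line from Theorem~\ref{e-ref} via the arithmetical soundness of $\glae$, and your write-up merely spells out the two $\preceq$-directions that this entails. Nothing further is needed.
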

\begin{proof} Follows from Theorem~\ref{e-ref} by the arithmetical soundness of $\glae$. 
\end{proof}

\subsection{Classification of Reflection Principles}

\begin{theorem}\label{generalk} \emph{Let $k\geq 0$ and $\Box^k\lc{u}Q_1Q_2\ldots Q_n P \imps P$ be a reflection principle generator. Then 
\[ \Box^k\lc{u}Q_1Q_2\ldots Q_n P \imps P \ \ \simeq\ \ \Box^k\lc{u}P\imps P .\] 
}
\end{theorem}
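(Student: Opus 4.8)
\textbf{Proof proposal for Theorem~\ref{generalk}.}
The plan is to reduce the general case $k\geq 0$ to the already-settled case $k=0$ (Corollary~\ref{k=0}), by showing that prefixing $k$ boxes is ``transparent'' with respect to the internal string $Q_1Q_2\ldots Q_n$. Concretely, I would establish the schema
\[
\glae\proves \Box^k\lc{u}Q_1Q_2\ldots Q_n P \ \imp\ \Box^k\lc{u}P
\]
and, conversely,
\[
\glae\proves \Box^k\lc{u}P \ \imp\ \Box^k\lc{u}Q_1Q_2\ldots Q_n P .
\]
Granting these two derivabilities in $\glae$, the arithmetical soundness of $\glae$ (stated in Section~2) immediately gives $\Box^k\lc{u}Q_1Q_2\ldots Q_n P \imps P \simeq \Box^k\lc{u}P \imps P$: from $\PA+\{[\Box^k\lc{u}P\imps P]^\ast\}$ one derives $[\Box^k\lc{u}Q_1\ldots Q_nP]^\ast \to [\Box^k\lc{u}P]^\ast \to P^\ast$ using the first schema, and symmetrically for the other direction using the second schema.

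The key tool is internalization of $\glae$-proofs (quoted in Section~2): \emph{if $\glae\proves F$ then $\glae\proves \lc{p}F$ for some proof term $p$}; combined with Necessitation (R2), this also gives $\glae\proves\Box^j F$ for every $j$. So the work is to produce, inside $\glae$, the implications one level ``under'' the $\lc{u}\cdots$, and then push $k$ boxes in front.

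For the forward direction I would argue as follows. By Theorem~\ref{e-ref} (with $u$ replaced formally by any fresh variable, or rather by the observation that its proof is schematic in the leading variable), $\glae\proves \lc{v}Q_1\ldots Q_nP\imps P$, hence by normality-style reasoning one wants to convert this into $\glae\proves \lc{u}Q_1\ldots Q_nP\imps \lc{u}P$. This is exactly the pattern appearing in the proof of Theorem~\ref{e-ref}, Case~2: from a \emph{provable} implication $\lc{u}X\imp Y$ (here $X=Q_1\ldots Q_nP$, $Y=P$) one uses E-reflection ($\neg Y\imp\neg\lc{u}X$), axiom {\bf C2} ($\neg\lc{u}X\imp\Box\neg\lc{u}X$), {\sf GL}-reasoning, transitivity (GL2), and the Reflection Rule (R3) to obtain $\glae\proves \lc{u}X\imp Y$ \emph{and, crucially, its boxed form under the Reflection Rule's premise} $\Box^m(\lc{u}X\imp Y)$; taking $Y=\lc{u}P$ and using C1 ($\lc{u}X\imp\Box X$, so $\lc{u}X$ gives $\Box Q_1\ldots Q_nP$, whence $\Box P$, whence, by internalization applied to a proof term $p$ with $\glae\proves\lc{p}(\Box P\imp P)$ together with {\bf LP1}/application, $\lc{u'}P$ for a suitable term $u'$ built from $u$ and $p$). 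Since the statement only asserts equivalence of the \emph{reflection principles} (the sets $\{[\cdot]^\ast\}$), it is enough to get $\glae\proves \lc{u}Q_1\ldots Q_nP\imp \lc{t}P$ for \emph{some} term $t$ depending on $u$; then $\glae\proves \Box^k(\lc{u}Q_1\ldots Q_nP\imp\lc{t}P)$ by Necessitation, and normality (GL1) distributes the $\Box^k$, yielding $\glae\proves \Box^k\lc{u}Q_1\ldots Q_nP\imp\Box^k\lc{t}P$. The reverse direction is easier: one shows $\glae\proves \lc{u}P\imp \lc{s}Q_1\ldots Q_nP$ for some term $s$ by the dual device --- e.g.\ if some $Q_i=\Box$ we use C1 to get boxes, if $Q_i=\lc{w}\cdot$ we supply a constant/term via internalization and the appropriate {\bf LP} axioms --- then box it $k$ times as before.

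The main obstacle I anticipate is the careful bookkeeping of proof \emph{terms} in the forward direction: E-reflection and C1 let us pass from $\lc{u}Q_1\ldots Q_nP$ to $\Box P$ and hence (by internalizing the provable $\Box P\imp P$, or rather the relevant instance) to a \emph{provably proved} $P$, but the witnessing term is some $t(u)$ rather than $u$ itself. One must verify that the resulting $\glae$-implication $\lc{u}Q_1\ldots Q_nP\imp \lc{t}P$ genuinely holds with a term $t$ that is a legitimate proof term of the language (built from $u$, constants, $\cdot$, $+$, $!$), and that this is enough for equivalence of the associated arithmetical reflection principles --- which it is, since $\{[\lc{t}P\imps P]^\ast\}\subseteq\{[\lc{u}P\imps P]^\ast\}$ up to renaming and both directions only need $\PA$ to prove one set from the other. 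A clean way to sidestep term-juggling is to prove the two $\glae$-derivabilities purely at the level of the implications $\Box^k\lc{u}X\imp\Box^k\lc{u}Y$ using C1, C2, C3, {\bf LP1--LP4}, GL1--GL3 and R3 exactly in the style of the displayed $1$--$9$ derivation in Theorem~\ref{e-ref}, and I would present the argument in that format, splitting on whether $n=0$ (trivial), $Q_1=\lc{w}\cdot$ (use {\bf LP4} then induction on $n$), or $Q_1=\Box$ (use C1/C3 and the $1$--$9$ pattern), then prefixing $\Box^k$ via R2 and GL1.
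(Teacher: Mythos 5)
Your overall plan (reduce to the leading-explicit case of Theorem~\ref{e-ref}/Corollary~\ref{k=0}, then transport across $\Box^k$ by necessitation and normality) does match the skeleton of the paper's argument, but both of your directions have genuine holes. The serious one is the ``$\succeq$'' direction. Your displayed claim $\glae\proves \Box^k\lc{u}P\imp\Box^k\lc{u}Q_1\ldots Q_nP$, and even your weakened version $\glae\proves \lc{u}P\imp\lc{s}Q_1\ldots Q_nP$ for some term $s$, is simply false whenever some $Q_i$ is a proof variable: if, say, $Q_1=\lc{w}$, then by {\bf LP4} your implication would yield $\lc{u}P\imp\lc{w}P$ with $w$ a free proof variable, which is not derivable and is arithmetically refutable (an arbitrary proof $w^\ast$ need not prove $P^\ast$). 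What is needed is the freedom to \emph{reinterpret the proof variables inside} $Q_1\ldots Q_n$ (and the outer $u$), and this is exactly the paper's key Lemma~\ref{sharp}: for each $\ast$ one builds a new interpretation $\sharp$, agreeing with $\ast$ on $P$, by setting the inner variable to $u^\ast$, the outer one to a proof-checker value $!u^\flat$, or to a computed proof $t$ in the $\Box$ case, so that $\PA\proves[\lc{u}P]^\ast\imp[\lc{u}Q_1\ldots Q_nP]^\sharp$; then one invokes the hypothesis at the instance $\sharp$ and uses $P^\sharp=P^\ast$. Equivalently, at the syntactic level you would have to substitute terms for those inner proof variables, not leave them free; your proposal never does this, and the phrase ``supply a constant/term via internalization'' does not fill that gap.

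The ``$\preceq$'' direction is salvageable but your specific chain contains an impossible step: there is no proof term $p$ with $\glae\proves\lc{p}(\Box P\imp P)$, since $\Box P\imp P$ is precisely the unprovable local reflection and is not a theorem of $\gla$, so it cannot be internalized; nor is there any way inside $\gla$ to pass from $\Box P$ to $\lc{t}P$. The workable route — which is what the paper does — is to take a proof of the implication of Corollary~\ref{k=0} itself and combine proof checking with internalized modus ponens to get $\lc{u}Q_1\ldots Q_nP\imp\lc{t}P$, then prefix $\Box^k$. Note also that the paper deliberately carries this out in $\pa$ with arithmetical proof codes (``the following argument could not be done in $\gla$''): doing it inside the logic needs internalization, hence proof constants and a nonempty constant specification, so your attributions to $\glae$ are not correct as stated, and arithmetical soundness would then only cover interpretations respecting that $C\!S$, whereas reflection principles comprise \emph{all} interpretations; one would additionally have to argue that the new constants are fresh and can be reinterpreted. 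In short: the decomposition is right, but the reverse direction is missing the central idea (Lemma~\ref{sharp}'s change of interpretation), and the forward direction as written relies on internalizing a non-theorem.
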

\begin{proof} The following argument could not be done in \gla; so, we reason in \pa\ instead.

First, we establish ``$\preceq $", i.e., $$\PA^\prime = \PA + \{[\Box^k\lc{u}P\imps P]^\ast\}\proves\{[\Box^k\lc{u}Q_1Q_2\ldots Q_n P \imps P]^\ast\}.$$
Fix an interpretation $\ast$. By Theorem \ref{k=0}, 
\[
\PA\proves \lc{u^\ast}[Q_1Q_2\ldots Q_n P]^\ast\imps \ai{P}.
\]
We write $\lc{t}F$ for $\Proof{t,F}$ and $\Box F$ for {\it Provable F} in \pa, for brevity.

Let $s$ be its proof in \PA. Then,
\[ \PA\proves \lc{s}(\lc{u^\ast}[Q_1Q_2\ldots Q_n P]^\ast\imps \ai{P}). \]
By proof checking and internalized {\it Modus Ponens} in \pa, we can find an arithmetical proof $t$ such that
\[ \PA\proves \lc{u^\ast}[Q_1Q_2\ldots Q_n P]^\ast\imps \lc{t}\ai{P}, \]
from which we conclude
\[ \PA\proves \Box^k\lc{u^\ast}[Q_1Q_2\ldots Q_n P]^\ast\imps \Box^k\lc{t}\ai{P}, \]
\[ \PA^\prime \proves \Box^k\lc{u^\ast}[Q_1Q_2\ldots Q_n P]^\ast\imps \ai{P}, \]
i.e.,
\[ \PA^\prime \proves [\Box^k\lc{u}Q_1Q_2\ldots Q_n P\imps P]^\ast . \]

Let us now establish ``$\succeq $", i.e., that $$\PA^{\prime\prime} = \PA + \{[\Box^k\lc{u}Q_1Q_2\ldots Q_n P\imps P]^\ast\}\proves\{[\Box^k\lc{u}P\imps P]^\ast\}.$$
\begin{lemma}\label{sharp} \emph{For each interpretation $\ast$ there is an interpretation $\sharp$ which coincides with $\ast$ on $P$ such that}
\[ \PA\proves [\lc{u}P]^\ast \imps [\lc{u}Q_1Q_2\ldots Q_n P ]^\sharp .\]
\end{lemma}
\begin{proof} By induction on $n$. The case $n=0$ is trivial. Let for some interpretation $\flat$ coinciding with $\ast$ on $P$,
\[ \PA\proves [\lc{u}P]^\ast \imps [\lc{u}Q_2\ldots Q_n P]^\flat .\]
By proof-checking,
\[
 \PA\proves [\lc{u}P]^\ast \imps\ \lc{!u^\flat}\lc{u^\flat}[Q_2\ldots Q_n P]^\flat .
\]

\emph{Case 1}. If $Q_1$ is a proof variable $v$, then define $u^\sharp$ as $!u^\flat$, $v^\sharp$ as $u^\flat$, set $\sharp$ to be $\flat$ everywhere else, and get the desired
\[ \PA\proves [\lc{u}P]^\ast \imps [\lc{u}\lc{v}Q_2\ldots Q_n P]^\sharp .\]

\emph{Case 2}. If $Q_1$ is $\Box$, then by reasoning in $\PA$ find a proof $t$ such that
\[ \PA\proves \lc{!u^\flat}\lc{u^\flat}[Q_2\ldots Q_n P]^\flat \imps \lc{t}\Box [Q_2\ldots Q_n P]^\flat ,\]
therefore, $\PA\proves [\lc{u}P]^\ast \imps \lc{t}\Box [Q_2\ldots Q_n P]^\flat$.
Define $u^\sharp=t$ ($u$ is fresh!) and set $\sharp$ equal $\flat$ everywhere else. Then
\[ \PA\proves [\lc{u}P]^\ast \imps [\lc{u}Q_1Q_2\ldots Q_n P]^\sharp ,\]
which completes theorem's proof. 
\end{proof}

Now, by the standard $\pa$-reasoning,
\[ \PA\proves [\Box^k\lc{u}P]^\ast \imps [\Box^k\lc{u}Q_1Q_2\ldots Q_n P]^\sharp ,\]
and since
\[ \PA^{\prime\prime}\proves [\Box^k\lc{u}Q_1Q_2\ldots Q_n P\imps P]^\sharp \]
and $P^\sharp=P^\ast$ we conclude that
\[ \PA^{\prime\prime}\proves [\Box^k\lc{u}P\imps P]^\ast . \]
\end{proof}

From Theorems \ref{nine-one}  and \ref{generalk} immediately follows 

\begin{theorem}\label{tfour}{\bf (Classification of Reflection Principles)}\label{refprinciples} \emph{Any reflection principle is equivalent to either \[ \Box P\imps P \] or, for some $k\geq 0$, to \[ \Box^k\lc{u}P\imps P.\] } 
\end{theorem}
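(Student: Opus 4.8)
The plan is to derive Theorem~\ref{tfour} directly from the two preceding results, so this is essentially a bookkeeping argument: every generator $Q_1Q_2\ldots Q_m P \imps P$ must fall into exactly one of the two advertised shapes, and for each shape the equivalence to the canonical representative is already in hand. First I would do a case split on the sequence $Q_1,\ldots,Q_m$ according to whether it contains any explicit proof operator.

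If no $Q_i$ is an explicit proof operator, then every $Q_i$ is $\Box$, so the generator is $\Box^m P \imps P$. If $m=0$ the generator is the trivial $P\imps P$, which is provable in $\glae$ hence, by soundness, $\preceq$ everything, and it is easily seen to be equivalent to $\lc{u}P\imps P$ as noted in the introduction (both are $\glae$-provable); I would fold this into the $\Box^0\lc{u}P\imps P$ case. For $m\geq 1$, Theorem~\ref{nine-one} (Uniqueness of Provability Reflection) gives $\Box^m P\imps P \simeq \Box P\imps P$, which is the first alternative.

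If some $Q_i$ is an explicit proof operator, let $i$ be the \emph{least} such index. Then $Q_1=\cdots=Q_{i-1}=\Box$ and $Q_i$ is $\lc{u}\,$ for some fresh proof variable $u$, so the generator has the form $\Box^{k}\lc{u}Q_{i+1}\ldots Q_m P \imps P$ with $k=i-1\geq 0$. Theorem~\ref{generalk} (with the tail $Q_{i+1}\ldots Q_m$ playing the role of $Q_1\ldots Q_n$, where $n=m-i$) then yields $\Box^{k}\lc{u}Q_{i+1}\ldots Q_m P \imps P \simeq \Box^{k}\lc{u}P\imps P$, which is the second alternative. Since $\simeq$ is transitive and symmetric — immediate from Definition of $\preceq$ — this completes the classification.

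The only subtlety, and the single point where one must be slightly careful, is the boundary bookkeeping: making sure the $m=0$ generator and the purely-$\Box$ generators land on the correct side, and that ``least explicit index $i$'' correctly produces a genuine $\Box^{k}$ prefix (all of $Q_1,\ldots,Q_{i-1}$ really are $\Box$ by minimality of $i$) so that Theorem~\ref{generalk} applies verbatim. There is no genuine obstacle beyond this; both hard lemmas (Theorems~\ref{nine-one} and~\ref{generalk}) have already been proved, and the statement is an immediate corollary, exactly as the sentence ``From Theorems~\ref{nine-one} and~\ref{generalk} immediately follows'' anticipates.
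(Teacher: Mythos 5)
Your proposal is correct and follows essentially the same route as the paper: split on whether the generator contains an explicit proof operator, apply Theorem~\ref{nine-one} in the all-$\Box$ case and Theorem~\ref{generalk} (with the leading $\Box$-prefix determined by the first explicit operator) otherwise. The only difference is that you explicitly handle the degenerate $m=0$ generator, a boundary case the paper's proof passes over silently.
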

\begin{proof} Consider and arbitrary reflection principle $\pi$ 
\[ Q_1Q_2\ldots Q_n P  \imps P. \]
If all $Q_i$ are $\Box$'s, then, by Theorem~\ref{nine-one}, $\pi = \Box P \imps P$. Otherwise, $\pi$ can be written as 
\[ \Box^k\lc{u}Q_{n-k-1}\ldots Q_n P  \imps P \]
for an appropriate $k\geq 0$. By Theorem~\ref{generalk}, $\pi = \Box^k\lc{u}P\imps P$. 
\end{proof}

\subsection{Hierarchy of Reflection Principles}

\begin{theorem}\label{nine-four} \emph{Reflection principles form a linear ordering}
\[ \lc{u}P\imps P\ \prec\ \Box\lc{u}P\imps P\ \prec\ \Box^2\lc{u}P\imps P\  \prec\ \ldots\ \prec\ \Box P\imps P. \]
\end{theorem}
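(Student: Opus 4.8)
The plan is to reduce Theorem~\ref{nine-four} to the already-announced facts, so almost all the work is bookkeeping. By Theorem~\ref{six} (which we may invoke) and Theorem~\ref{generalk}, we have $\Box^k\lc{u}P\imps P \simeq \neg\Box^k\bot$ for every $k\geq 0$, and of course $\neg\Box^0\bot = \neg\bot$ is the $k=0$ case corresponding to $\lc{u}P\imps P$. Similarly $\Box P\imps P$ sits at the top of the chain by Lemma~\ref{nine-five}. So the linear ordering displayed in the statement is literally the chain
\[ \neg\bot\ \prec\ \neg\Box\bot\ \prec\ \neg\Box^2\bot\ \prec\ \ldots\ \prec\ \Box P\imps P \]
of Lemma~\ref{nine-five}, transported along the equivalences just recalled. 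Since $\simeq$ is by definition symmetric and (one checks immediately) transitive, and since $H\prec G$ is defined as $H\preceq G$ together with $H\not\simeq G$, the relation $\prec$ is preserved under replacing either side by a $\simeq$-equivalent formula; hence the chain for the $\Box^k\lc{u}P\imps P$'s follows from the chain for the $\neg\Box^k\bot$'s.

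Concretely I would carry out the following steps. First, record that $\preceq$ is a preorder and that $\simeq$ is the induced equivalence, so that $G_1\simeq G_1'$, $G_2\simeq G_2'$ and $G_1\prec G_2$ imply $G_1'\prec G_2'$; this is a one-line observation from the definitions. Second, cite Theorem~\ref{six} to get $\Box^k\lc{u}P\imps P\simeq\neg\Box^k\bot$ for all $k\geq 0$, noting that for $k=0$ this reads $\lc{u}P\imps P\simeq\neg\bot$ (indeed $\lc{u}P\imps P$ is provable in $\glae$ by Theorem~\ref{e-ref}, as is $\neg\bot$, so both principles are $\PA$-provable and hence trivially equivalent). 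Third, invoke Lemma~\ref{nine-five} for the strict chain $\neg\bot\prec\neg\Box\bot\prec\neg\Box^2\bot\prec\ldots\prec\Box P\imps P$. Fourth, combine: applying the substitution principle from step~1 along the equivalences of step~2 turns the chain of step~3 into exactly the asserted chain $\lc{u}P\imps P\prec\Box\lc{u}P\imps P\prec\Box^2\lc{u}P\imps P\prec\ldots\prec\Box P\imps P$, where at the top end we keep $\Box P\imps P$ unchanged.

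The only place where there is genuine mathematical content — and hence the step I expect to be the real obstacle — is Lemma~\ref{nine-five}, i.e.\ the strictness of the consistency hierarchy $\neg\Box^k\bot\prec\neg\Box^{k+1}\bot$ and $\neg\Box^k\bot\prec\Box P\imps P$. This is the classical fact that iterated consistency statements form a strictly increasing sequence below the local reflection principle; the $\preceq$ directions are easy (from $\Box^{k+1}\bot\imps\Box^k\bot$ provable in $\gl$, and from $\Box^k\bot\imps\bot$ being an instance of $\Box P\imps P$ after substituting $\bot$ through $\Box^{k-1}\bot$ for $P$, using arithmetic soundness), but the failures of the reverse inequalities require a non-provability argument. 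For $\neg\Box^{k+1}\bot\not\preceq\neg\Box^k\bot$ one uses that $\PA+\neg\Box^k\bot$ is consistent and, by the second incompleteness theorem applied to this theory (or by an explicit provability-logic model), does not prove its own consistency-type statement $\neg\Box^{k+1}\bot$; for $\Box P\imps P\not\preceq\neg\Box^k\bot$ one uses that no consistent recursively axiomatized extension of $\PA$ proves full local reflection, e.g.\ because it would prove its own consistency. Since the statement of Lemma~\ref{nine-five} calls it "well-known", I would simply cite it (Beklemishev's survey \cite{AB05} or \cite{Boo93}) rather than reprove it, and keep the proof of Theorem~\ref{nine-four} to the four short bookkeeping steps above.
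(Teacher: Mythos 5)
Your proposal is correct and follows essentially the same route as the paper: Theorem~\ref{nine-four} is obtained exactly by transporting the well-known strict chain of Lemma~\ref{nine-five} along the equivalences $\Box^k\lc{u}P\imps P\simeq\neg\Box^k\bot$ of Theorem~\ref{six}, using that $\prec$ is invariant under replacing either side by a $\simeq$-equivalent principle. The only cosmetic difference is that the paper includes a short proof of Lemma~\ref{nine-five} (via arithmetical soundness and completeness of \gl, i.e.\ the linear-model argument you mention parenthetically) rather than citing it, and your appeal to Theorem~\ref{generalk} is not actually needed for this step.
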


This Theorem is an immediate corollary of the following two assertions. 

\begin{theorem}\label{six}{\it For each $k\geq 0$,}  $\ \ \Box^k\lc{u}P\imps P\ \ \simeq \ \ \neg\Box^k\bot$. 
\end{theorem}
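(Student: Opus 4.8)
The plan is to prove $\Box^k\lc{u}P\imps P\ \simeq\ \neg\Box^k\bot$ by establishing the two inequalities separately, working at the arithmetical level (reasoning in $\PA$) since, as in Theorem~\ref{generalk}, the key manipulations involve constructing specific proof terms and substituting interpretations, which cannot be carried out inside $\gla$ itself. Throughout I write $\lc{t}F$ for $\Proof{t,F}$ and $\Box F$ for $\Provable F$.

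For ``$\neg\Box^k\bot\ \preceq\ \Box^k\lc{u}P\imps P$'': fix an arithmetical interpretation $\ast$. I want $\PA + \{[\Box^k\lc{u}P\imps P]^\ast\}\proves \neg\Box^k\bot$. The idea is to choose the interpretation cleverly, but since $\neg\Box^k\bot$ contains no schematic letters, I instead argue uniformly: reason inside $\PA'=\PA+\{[\Box^k\lc{u}P\imps P]^\ast\}$, assume $\Box^k\bot$ toward a contradiction. Since $\bot\imps \lc{u^\ast}(P^\ast)$ is (trivially, provably) a theorem of $\PA$, we can find a proof term, and by monotonicity of $\Box^k$ under provable implication we get $\Box^k\bot \imps \Box^k\lc{u^\ast}(P^\ast)$ in $\PA$; combined with the reflection instance this gives $\Box^k\bot\imps P^\ast$. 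But the same works with $\neg P^\ast$ in place of $P^\ast$ only if the reflection instance were available for $\neg P$ — it is not. The cleaner route: take $P^\ast=\bot$. Then $\lc{u^\ast}\bot$ is refutable in $\PA$ for the chosen closed proof term $u^\ast$ (a concrete numeral that does not code a proof of $0=1$), so $\neg\lc{u^\ast}\bot$ is a $\PA$-theorem, hence provably so, and $\Box^k\neg\lc{u^\ast}\bot$ holds; meanwhile $\PA\proves\bot\imps\lc{u^\ast}\bot$ trivially, so $\Box^k\bot\imps\Box^k\lc{u^\ast}\bot$; together these yield $\neg\Box^k\bot$ outright in $\PA$, hence a fortiori in $\PA'$. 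Thus every instance of $\neg\Box^k\bot$ (there is essentially one, up to the fixed $\Proof$) is provable from the reflection principle.

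For ``$\Box^k\lc{u}P\imps P\ \preceq\ \neg\Box^k\bot$'': fix $\ast$ and reason inside $\PA''=\PA+\neg\Box^k\bot$. I must derive $[\Box^k\lc{u}P\imps P]^\ast$, i.e.\ $\Box^k\lc{u^\ast}(P^\ast)\imps P^\ast$. Assume $\Box^k\lc{u^\ast}(P^\ast)$. By axiom {\bf C1}, $\PA\proves \lc{u^\ast}(P^\ast)\imps \Box(P^\ast)$, so applying $\Box^k$ we get $\Box^k\lc{u^\ast}(P^\ast)\imps \Box^{k}\Box(P^\ast)=\Box^{k+1}P^\ast$; hence $\Box^{k+1}P^\ast$. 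Also by {\bf LP4}-style reasoning at the arithmetical level, $\lc{u^\ast}(P^\ast)\imps P^\ast$ is a $\PA$-theorem, so $\Box^k\lc{u^\ast}(P^\ast)\imps\Box^k P^\ast$, giving $\Box^k P^\ast$. Now I use these together with $\neg\Box^k\bot$: the point is that $\Box^k$ of anything plus $\neg\Box^k\bot$ does not immediately give the thing, so I should instead push toward $P^\ast$ directly using formalized $\Sigma_1$-completeness and reflection in $\PA$. The standard fact here (this is the ``well-known Lemma~\ref{nine-five}'' ingredient) is that $\PA+\neg\Box^k\bot$ proves the $k$-fold iterated reflection $\Box^k\varphi\imps \varphi$ only in a restricted sense; the correct statement to invoke is that $\neg\Box^k\bot$ is equivalent over $\PA$ to $\Box^{k-1}\mathrm{Con}(\PA)$, and iterating consistency controls iterated provability. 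Concretely: from $\Box^k\lc{u^\ast}(P^\ast)$ and $\lc{u^\ast}(P^\ast)\imps P^\ast$ (a $\PA$-theorem) we get, peeling one box at a time and using that $\neg\Box^j\bot$ (which follows from $\neg\Box^k\bot$ for $j\le k$) licenses the reflection $\Box(\Box^{j}\psi)\imps$ (consistency-of inner theory)$\imps\ldots$, that $P^\ast$ holds. The honest way to organize this is by a side induction on $k$ showing $\PA+\neg\Box^k\bot\proves \Box^k\lc{u^\ast}G\imps G$ for every $G$ and closed term $u^\ast$: base $k=0$ is explicit reflection; step uses {\bf C1} to convert the outer box, then the induction hypothesis for $k-1$ after noting $\neg\Box^{k}\bot$ gives $\neg\Box^{k-1}\bot$ and also gives, under the $\Box$, the consistency needed to invoke reflection one level down.

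The main obstacle is the ``$\preceq$'' direction, i.e.\ getting $P^\ast$ out of $\Box^k\lc{u^\ast}(P^\ast)$ using only $\neg\Box^k\bot$: this is exactly where iterated provability meets iterated consistency, and it has to be done by a careful induction on $k$ that keeps track of which consistency statement is available under how many boxes, rather than by any single $\gla$-derivation — indeed the theorem's own remark notes this cannot be internalized. The $\succeq$ direction, by contrast, should reduce quickly to the concrete-proof-term trick of picking $P^\ast=\bot$ together with monotonicity of $\Box^k$, and can be stated in a couple of lines.
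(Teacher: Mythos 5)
Both directions of your proposal have genuine problems. In the direction $\neg\Box^k\bot\preceq\ \Box^k\lc{u}P\imps P$ you choose $P^\ast=\bot$ (correct, and this is the paper's move), but you then claim that $\Box^k\neg\lc{u^\ast}\bot$ together with $\Box^k\bot\imps\Box^k\lc{u^\ast}\bot$ yields $\neg\Box^k\bot$ \emph{outright in} $\PA$. That cannot be right: for $k\geq 1$ the sentence $\neg\Box^k\bot$ is an iterated consistency statement and is unprovable in $\PA$ by G\"odel's second theorem. Your slip is treating $\Box^k\lc{u^\ast}\bot\wedge\Box^k\neg\lc{u^\ast}\bot$ as a contradiction; inside $\PA$ it only yields $\Box^k\bot$ again. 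The correct (and one-line) argument must actually use the reflection instance: $\PA\proves\Box^k\bot\imps\Box^k\lc{u^\ast}\bot$, and the instance $\Box^k\lc{u^\ast}\bot\imps\bot$ of the principle then gives $\neg\Box^k\bot$ in $\PA+\{[\Box^k\lc{u}P\imps P]^\ast\}$ --- which is exactly what the paper's ``putting $P=\bot$'' amounts to.

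In the main direction $\Box^k\lc{u}P\imps P\ \preceq\ \neg\Box^k\bot$ your sketch never closes. Deriving $\Box^k P^\ast$ and $\Box^{k+1}P^\ast$ via {\bf C1}/{\bf LP4} is a dead end (as you notice), the claimed equivalence of $\neg\Box^k\bot$ with $\Box^{k-1}\mathrm{Con}(\PA)$ is false for $k\geq 2$ ($\neg\Box^k\bot$ is rather $\mathrm{Con}(\PA+\neg\Box^{k-1}\bot)$), and the induction step you gesture at needs $\neg\Box^k\bot$ to supply consistency ``under the $\Box$'', i.e.\ it conflates $\neg\Box\,\Box^{k-1}\bot$ with $\Box\neg\Box^{k-1}\bot$. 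The missing idea is negative introspection for the \emph{proof} predicate (axiom {\bf C2}, arithmetically the provable decidability of \emph{Proof}): assume $\neg\Box^k\bot$, $\Box^k\lc{u}P$ and $\neg P$; from $\neg P$ and explicit reflection get $\neg\lc{u}P$, by {\bf C2} get $\Box\neg\lc{u}P$, by transitivity $\Box^k\neg\lc{u}P$, and with $\Box^k\lc{u}P$ conclude $\Box^k\bot$, a contradiction. Note this is a short derivation inside $\glae$ followed by arithmetical soundness --- contrary to your closing remark, no induction on $k$ and no iterated-consistency bookkeeping are needed (the ``cannot be done in $\gla$'' caveat in the paper refers to Theorem~\ref{generalk}, not to this theorem).
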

\begin{proof}
Putting $P=\bot$ we get $\Box^k\lc{u}P\imps P\succeq  \neg\Box^k\bot$. For the converse, argue in $\glae$. Case $k=0$ is trivial. Let $k\geq 1$. Assume $\neg\Box^k\bot$, $\Box^k\lc{u}P$, and $\neg P$ and look for a contradiction. By explicit reflection, from $\neg P$ we derive $\neg\lc{u}P$ and, by explicit-implicit negative introspection, $\Box\neg\lc{u}P$. By transitivity, we get $\Box^k\neg\lc{u}P$. From this and $\Box^k\lc{u}P$, by the usual modal reasoning we conclude $\Box^k(\neg\lc{u}P\wedge\lc{u}P)$; hence $\Box^k\bot$, a contradiction.
\end{proof} 

Now, to get Theorem \ref{nine-four}, it suffices to refer to a well-known fact:

\begin{lemma}\label{nine-five}
\[ \neg\bot \prec \neg\Box\bot\prec\neg\Box^2\bot\prec\neg\Box^3\bot\prec \ldots \prec \Box P\imps P. \]
\end{lemma}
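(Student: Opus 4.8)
The plan is to prove Lemma~\ref{nine-five} in two parts: first the strict inequalities $\neg\Box^n\bot \prec \neg\Box^{n+1}\bot$ along the chain, and then the fact that $\neg\Box^n\bot \prec \Box P \imps P$ for every $n$, with the whole chain being linearly ordered. Since these are statements about arithmetical interpretations, I would work partly in $\PA$ and partly cite $\gl$-provability together with the arithmetical soundness/completeness of $\gl$ (equivalently, of $\glae$ restricted to the $\Box$-fragment, via the soundness and completeness theorems quoted in Section~2). The key point is that $\neg\Box^n\bot$ is (provably equivalent to) the iterated consistency statement $\mathrm{Con}^{(n)}(\PA)$, and these form the classical consistency hierarchy.

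First I would establish $\neg\Box^{n+1}\bot \vdash_{\gl} \neg\Box^n\bot$, i.e. $\Box^n\bot \imps \Box^{n+1}\bot$, which is just necessitation applied $n$ times to $\bot \imps \Box\bot$ — actually it follows from $\Box^n(\bot\imps\Box\bot)$ and normality, and $\bot\imps\Box\bot$ is a tautology, so $\Box^n\bot\imps\Box^n\Box\bot = \Box^{n+1}\bot$. Hence $\neg\Box^{n+1}\bot \preceq \neg\Box^n\bot$ by arithmetical soundness. For strictness, $\neg\Box^n\bot \not\preceq \neg\Box^{n+1}\bot$: by arithmetical completeness of $\gl$ it suffices to show $\gl \not\proves \Box^{n+1}\bot \imps \Box^n\bot$, which is a standard fact witnessed by a finite transitive irreflexive Kripke model — a chain of length $n+1$ — on which $\Box^{n+1}\bot$ is forced at the root but $\Box^n\bot$ is not. (Equivalently one invokes the classical theorem that $\PA + \mathrm{Con}^{(n+1)}(\PA) \not\proves \mathrm{Con}^{(n)}(\PA)$, by Gödel's second incompleteness theorem applied to the consistent theory $\PA + \mathrm{Con}^{(n)}$.) This also gives linearity, since the whole family is pre-ordered by these implications.

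Next I would handle the top of the chain: $\neg\Box^n\bot \prec \Box P \imps P$ for all $n$. The direction $\neg\Box^n\bot \preceq \Box P \imps P$ holds because from the local reflection scheme one derives, in $\PA$ + local reflection, every $\neg\Box^k\bot$ (take the instance with $P$ replaced by the sentence $\Box^{k-1}\bot$, or simply iterate: $\Box\bot \imps \bot$ gives $\neg\Box\bot$, and more generally one gets the full consistency hierarchy, indeed full reflection is strictly stronger than every finite iterate of consistency). Strictness, $\Box P \imps P \not\preceq \neg\Box^n\bot$, again reduces — now using Theorem~\ref{six} backwards together with soundness — to showing that $\PA + \mathrm{Con}^{(n)}(\PA)$ does not prove the uniform local reflection scheme; but in fact it does not even prove $\mathrm{Con}^{(n+1)}(\PA)$, which is an instance-consequence of the scheme, so we are done by the previous paragraph.

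The main obstacle is bookkeeping rather than conceptual depth: one must be careful that $\neg\Box^n\bot$, as the arithmetical interpretation of the $\gl$-formula $\neg\Box^n\bot$, really is provably equivalent in $\PA$ to the $n$-th iterated consistency statement and that the separating Kripke models are correctly transferred through the Solovay-style arithmetical completeness theorem quoted for $\glae$ (or just for $\gl$). I would state explicitly that the chain is strict and linear by exhibiting, for each pair, the finite $\gl$-countermodel, and then close with the remark that $\Box P \imps P$ dominates the entire chain because the reflection scheme proves arbitrarily high iterated consistency. Since Lemma~\ref{nine-five} is flagged as ``well-known,'' I would keep the write-up brief, citing \cite{Boo93} or \cite{AB05} for the consistency hierarchy and for the $\gl$-unprovability facts, rather than reproving Solovay's theorem.
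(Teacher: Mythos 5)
Your proposal follows essentially the same route as the paper's proof: $\gl\proves\Box^{k-1}\bot\imp\Box^{k}\bot$ plus arithmetical soundness for the inclusions, a finite linear irreflexive Kripke countermodel to $\Box^{n+1}\bot\imp\Box^{n}\bot$ transferred through arithmetical completeness of $\gl$ for strictness, chained reflection instances to get $\neg\Box^{k}\bot\preceq\Box P\imp P$, and strictness at the top because $\PA+\neg\Box^{n}\bot$ cannot even prove $\neg\Box^{n+1}\bot$, which the reflection scheme yields. The only corrections needed are two direction slips that do not affect the substance: under the paper's convention ($H\preceq G$ means $\PA+\{G^{\ast}\}$ proves $\{H^{\ast}\}$) the strictness you must show is $\neg\Box^{n+1}\bot\not\preceq\neg\Box^{n}\bot$, not $\neg\Box^{n}\bot\not\preceq\neg\Box^{n+1}\bot$, and the classical fact is $\PA+\mathrm{Con}^{(n)}\not\proves\mathrm{Con}^{(n+1)}$ rather than the reverse --- the facts you actually establish ($\gl\not\proves\Box^{n+1}\bot\imp\Box^{n}\bot$, G\"odel's second incompleteness theorem applied to $\PA+\mathrm{Con}^{(n)}$) are exactly the right ones.
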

\begin{proof} 

a) For $k\geq 1$, by transitivity, $\gl\proves\Box^{k-1}\bot\imp\Box^k\bot$, hence
$\gl\proves\neg\Box^k\bot\imp\neg\Box^{k-1}\bot$.
By the arithmetical soundness of \gl, 
\[ \neg\Box^{k-1}\bot\preceq \neg\Box^k\bot. \]
Modal formula $\Box^k\bot\imp\Box^{k-1}\bot$ is false at the root of a $k$-node linear model, hence not provable in \gl. By the arithmetical completeness of \gl, $\pa\not\proves\Box^k\bot\imp\Box^{k-1}\bot$,
hence $$\neg\Box^{k}\bot\not\preceq \neg\Box^{k-1}\bot,$$ therefore $$\neg\Box^{k-1}\bot \prec \neg\Box^k\bot.$$

b) For each $k\geq 0$, $\neg\Box^k\bot \preceq \Box P \imp P$. Indeed, cases of $k=0,1$ are trivial. Consider $k\geq 2$. From instances of $\Box P\imp P$
\[ \Box^k\bot\imp\Box^{k-1}\bot,\ \Box^{k-1}\bot\imp\Box^{k-2}\bot,\ldots,\Box\bot\imp\bot , \]
by a chain of syllogisms, we derive $\Box^k\bot\imp\bot$, and hence 
$$\pa + \{[\Box P\imp P]^\ast\}\proves\neg\Box^k\bot. $$
 
c) For any $k\geq 0$, $\Box P\imp P\not\preceq \neg\Box^k\bot$. Suppose the opposite, namely, that for some $k\geq 0$, $\Box P\imp P\preceq \neg\Box^k\bot$. Since, by b), \[ \neg\Box^{k+1}\bot\preceq  \Box P\imp P,\] we have $\neg\Box^{k+1}\bot\preceq \neg\Box^k\bot$, which is impossible, by a). 
\end{proof}

\section{Acknowledgements}
The author is grateful to Sergei Artemov, Melvyn Fitting,  Hidenori Kurokawa, Anil Nerode, Junhua Yu and logic groups of National Chung Cheng University, Academia Sinica and the Computational Logic seminar of the CUNY Graduate Center for useful discussions.

\end{document}